\documentclass[11pt]{article}

\usepackage[margin=1in]{geometry}
\usepackage{amsmath,amssymb,amsthm}
\usepackage{mathtools}
\usepackage{booktabs}
\usepackage{array}
\usepackage{float}
\usepackage{enumitem}
\usepackage[colorlinks=true,linkcolor=blue,citecolor=blue,urlcolor=blue]{hyperref}

\newtheorem{theorem}{Theorem}
\newtheorem{lemma}[theorem]{Lemma}

\theoremstyle{definition}
\newtheorem{definition}[theorem]{Definition}
\newtheorem{remark}[theorem]{Remark}
\newtheorem{example}[theorem]{Example}

\DeclareMathOperator{\Li}{Li}
\DeclareMathOperator{\re}{Re}
\newcommand{\Q}{\mathbb{Q}}

\title{${}_5F_4$ evaluations and a family of $\pi^2+\log^2$ identities}

\author{Cetin Hakimoglu-Brown}
\date{}

\begin{document}

\maketitle

\begin{abstract}
We evaluate the series $\sum_{n\ge1} z^n\big/\!\big(n^2\binom{4n}{n}\big)$, equal to
$-\tfrac{z}{4}\,{}_5F_4\!\left(1,1,1,\tfrac43,\tfrac53;\tfrac54,\tfrac32,\tfrac74,2;\tfrac{27z}{256}\right)$,
in closed form at an infinite family of algebraic points indexed by a rational angle
$\theta=j\pi/N$. Each value equals $c\,\pi^2$ plus a universal rational quadratic form in three
logarithms, with $c=-\tfrac13\left(1-\tfrac{2j}{N}\right)^2$. This is the quartic-base case reached
but not evaluated by D'Aurizio and Di Trani. The proof is self-contained: an exact integer factor
relating two weights, followed by Landen's identity, reduces the integral to a sum of squared
logarithms. \end{abstract}

\section{Introduction}

For coprime positive integers $p,q$ set $s=p+q$ and consider the integral
\begin{equation}\label{eq:base-integral}
  \mathcal{I}_{p,q}(z)\;=\;\int_0^1 \frac{\ln\!\big(1-z\,x^p(1-x)^q\big)}{x}\,dx .
\end{equation}
Expanding the logarithm and integrating term by term gives
\begin{equation}\label{eq:series}
  \mathcal{I}_{p,q}(z)
  = -\sum_{n\ge1}\frac{(pn-1)!\,(qn)!}{(sn)!}\,\frac{z^n}{n},
\end{equation}
a series convergent for $|z|<z_c$, where $z_c=s^s/(p^pq^q)$ is the reciprocal of the maximum of
$x^p(1-x)^q$ on $[0,1]$.

The case $p=1,q=2$ (base $x(1-x)^2$, the reciprocal central binomial $\binom{3n}{n}$) was studied by
D'Aurizio and Di Trani~\cite{DaurizioDiTrani}, who obtained closed forms of the type
$c\pi^2+\text{(logarithms)}$; related $\binom{3n}{n}$ series appear in Batir~\cite{Batir} and
Adegoke, Frontczak and Goy~\cite{AFG}, and reciprocal central binomial series more broadly are
treated in~\cite{Sprugnoli,Lehmer,Kaushik}. The dilogarithmic and Bloch-group background we use is
classical~\cite{Lewin,Zagier,LewinStructural,Maximon}; the beta-integral technique underlying the
present construction is developed in~\cite{HBanalytic}. In the final section of~\cite{DaurizioDiTrani} the authors
turn to the quartic base $x(1-x)^3$ --- the reciprocal central binomial $\binom{4n}{n}$, for which
\eqref{eq:base-integral} is
\begin{equation}\label{eq:5F4}
  \mathcal{I}_{1,3}(z)
  = -\frac{z}{4}\;{}_5F_4\!\left(\begin{matrix}1,1,1,\tfrac43,\tfrac53\\[2pt]
      \tfrac54,\tfrac32,\tfrac74,2\end{matrix}\;\middle|\;\frac{27z}{256}\right)
  = -\sum_{n\ge1}\frac{z^n}{n^2\binom{4n}{n}} ,
\end{equation}
In their final section they carry out this reduction, obtaining for the mirror base $x^3(1-x)$ a sum
of squared logarithms over the four fiber roots --- the analogue of \eqref{eq:landen-form} below.
They then impose that the fiber vanish at a single \emph{real} point $-m$, which gives
$z=-1/(m^3+m^4)$, and stop there: in their words, ``the final expression does not simplify as nicely
as before, involving a fair amount of cube roots.'' No evaluation of the ${}_5F_4$ is given.

The present paper takes the reduction as its starting point and supplies what is missing. The cube
roots are an artefact of forcing a real root; forcing instead a complex-conjugate pair at a
prescribed rational angle leaves a real quadratic cofactor, and the evaluation closes. Throughout, write $\mathcal{I}(z):=\mathcal{I}_{1,3}(z)$. Our results are:

\begin{itemize}
\item \textbf{A uniform family (Theorem~\ref{thm:main}).} For each rational angle $\theta=j\pi/N$
that admits a branch (real positive $r$ with $S^2-4P>0$ and $|z|<z_c$) there is an algebraic point
$z=z(j,N)$ at which
\[
  \mathcal{I}(z)= -\tfrac13\Big(1-\tfrac{2j}{N}\Big)^2\pi^2 + Q(\ln a,\ln|b|,\ln r),
\]
where $a,b,r$ are explicit real algebraic numbers and $Q$ is a fixed rational quadratic form,
\emph{independent of $(j,N)$}. Every angle we have examined admits such a branch.
\item \textbf{A self-contained proof (Section~\ref{sec:proof}).} The coefficient of $\pi^2$ and the
quadratic form are derived, not merely verified: an exact integer factor relating two weights,
followed by Landen's identity, reduces $\mathcal{I}(z)$ to a sum of four squared logarithms which
collapse onto three generators via the defining fiber relation.
\item \textbf{A boundary theorem (Section~\ref{sec:boundary}).} A parity count of the fiber's real
roots shows that the forced pair is the only complex pair --- so that purity is automatic --- exactly
for the ${}_4F_3$ and ${}_5F_4$ bases, while every higher order retains an unconstrained conjugate
pair. In this sense ${}_5F_4$ is the maximal such order. Whether the leftover pair can nonetheless
land at a rational angle for isolated higher-order parameters is not settled here; an exhaustive
search in the quintic case found no such member.
\end{itemize}

\section{The fiber and the matching construction}\label{sec:setup}

Throughout, $\log$ denotes the principal logarithm, with $\arg$ the principal argument taking values
in $(-\pi,\pi]$, and $\Li_2$ the principal branch of the dilogarithm, defined on
$\mathbb{C}\setminus[1,\infty)$ by $\Li_2(w)=-\int_0^w t^{-1}\log(1-t)\,dt$ along any path avoiding
the cut. For an admissible member these branches are never in question. By Remark~\ref{rem:signs} the real
roots satisfy $a>1$ and $b<0$, so $1/a\in(0,1)$ and $1/b<0$, while the remaining two roots are
nonreal; hence $1/\rho_k\notin[1,\infty)$ for every $k$. Likewise $1-1/a\in(0,1)$ and $1-1/b>1$ are
positive and the conjugate pair is nonreal, so $1-1/\rho_k\notin(-\infty,0]$. Every logarithm and
dilogarithm below is therefore evaluated away from its cut, and the identities used hold in their
displayed form with no additional multiple of $2\pi i$.

Since $1-z\,x(1-x)^3$ has constant term $1$ at $x=0$, it factors as
$\prod_{k=1}^4\big(1-x/\rho_k\big)$ where $\rho_1,\dots,\rho_4$ are the roots of the
\emph{fiber polynomial}
\begin{equation}\label{eq:fiber}
  x^4-3x^3+3x^2-x+\tfrac1z=0,\qquad\text{equivalently}\qquad x(1-x)^3=\tfrac1z .
\end{equation}
Consequently
\begin{equation}\label{eq:dilog}
  \mathcal{I}(z)=\sum_{k=1}^4\int_0^1\frac{\ln(1-x/\rho_k)}{x}\,dx
  = -\sum_{k=1}^4\Li_2\!\Big(\frac1{\rho_k}\Big).
\end{equation}

We select $z$ so that two of the four roots form a complex-conjugate pair at a prescribed rational
angle. Fix $\theta=j\pi/N$ with $\gcd(j,N)=1$, put $\kappa=2\cos\theta$, and require the factor
$x^2-\kappa r\,x+r^2$ (whose roots are $r e^{\pm i\theta}$) to divide \eqref{eq:fiber}. Matching
coefficients yields, with $S=a+b$ and $P=ab$ for the remaining quadratic factor $x^2-Sx+P$,
\begin{equation}\label{eq:matching}
  S=3-\kappa r,\qquad P=3-r^2-3\kappa r+\kappa^2r^2,\qquad
  (\kappa^3-2\kappa)\,r^3+3(1-\kappa^2)\,r^2+3\kappa r-1=0,
\end{equation}
and then
\begin{equation}\label{eq:z}
  z=\frac1{P\,r^2}=\frac1{a\,(1-a)^3}.
\end{equation}
The last equality holds because $a$ is itself a root of \eqref{eq:fiber}.

\begin{definition}\label{def:admissible}
A triple $(\theta,r,z)$ arising from \eqref{eq:matching}--\eqref{eq:z}, with $r$ a positive real root
of the matching cubic, is called \emph{admissible} if $S^2-4P>0$ and $|z|<z_c=256/27$. Equivalently:
the quadratic cofactor $x^2-Sx+P$ has two real roots $a,b$, the forced pair $re^{\pm i\theta}$ is the
only nonreal pair of the fiber, and the series \eqref{eq:series} converges at $z$.
\end{definition}

The matching cubic may have more than one positive root; only an admissible branch is meant
throughout, and Lemma~\ref{lem:modulus} below identifies it in closed form.

\begin{remark}
Equation~\eqref{eq:z} is the same one-parameter substitution $z=1/(t(1-t)^3)$ that opens the
analysis; the pure family is exactly the locus of real roots $t=a$ for which the forced angle
$\theta$ is a rational multiple of $\pi$. D'Aurizio and Di Trani forced a single \emph{real} root,
leaving an irreducible cubic cofactor; forcing a \emph{conjugate pair} instead leaves the real
quadratic cofactor $(x-a)(x-b)$ and is what makes the reduction close.
\end{remark}

\section{The closed form and its proof}\label{sec:proof}

The proof avoids the five-term (Abel) relation entirely, and with it the root-difference
logarithms $\ln|\rho_i-\rho_j|$ that would otherwise obstruct a three-generator closed form. The
mechanism is a change of weight.

\subsection{An exact integer factor}

\begin{lemma}\label{lem:factor}
For $x^p(1-x)^q$ with $z$ in the domain of convergence,
\[
  \int_0^1\frac{\ln\!\big(1-z\,x^p(1-x)^q\big)}{1-x}\,dx
  \;=\;\frac{p}{q}\int_0^1\frac{\ln\!\big(1-z\,x^p(1-x)^q\big)}{x}\,dx .
\]
\end{lemma}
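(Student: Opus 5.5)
The identity follows from termwise integration against the series \eqref{eq:series}, using beta integrals. For $|z|<z_c$ we have $|z\,x^p(1-x)^q|\le |z|/z_c<1$ uniformly on $[0,1]$. We therefore expand
\[
\ln\!\big(1-z\,x^p(1-x)^q\big)=-\sum_{n\ge1}\frac{z^n}{n}\,x^{pn}(1-x)^{qn}
\]
and integrate term by term against each weight. Interchanging sum and integral is justified by dominated convergence. Dividing a term by $x$ or by $1-x$ only lowers one exponent by one, leaving $x^{pn-1}(1-x)^{qn}$ or $x^{pn}(1-x)^{qn-1}$, both integrable for $n\ge1$. The series of absolute values is bounded by $\sum_n (|z|/z_c)^n/n$ times an integrable majorant. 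Near the endpoints we can instead bound the $n$-th term by $|z|^n B(pn,qn+1)/n$ (respectively $B(pn+1,qn)$), and this series converges because $B(pn,qn+1)\asymp z_c^{-n}$ up to polynomial factors.

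Next we evaluate the two beta integrals:
\[
\int_0^1 x^{pn-1}(1-x)^{qn}\,dx=\frac{\Gamma(pn)\Gamma(qn+1)}{\Gamma(sn+1)},\qquad
\int_0^1 x^{pn}(1-x)^{qn-1}\,dx=\frac{\Gamma(pn+1)\Gamma(qn)}{\Gamma(sn+1)}.
\]
Since $\Gamma(pn+1)=pn\,\Gamma(pn)$ and $\Gamma(qn+1)=qn\,\Gamma(qn)$, the second integral equals $\frac{pn}{qn}=\frac pq$ times the first. The factor is independent of $n$, so summing against $-z^n/n$ gives the stated identity, with the right-hand side being exactly \eqref{eq:series}.

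An alternative check is to integrate $\frac{d}{dx}\ln(1-zx^p(1-x)^q)$ weighted by $\ln$ terms, but the series route is cleaner. The only real obstacle is the convergence justification at the endpoints, particularly if $|z|=z_c$ is allowed. There one would pass to the limit by Abel's theorem, since both series converge at $|z|=z_c$: their terms decay like $n^{-3/2}$. The $n$-independence of the ratio $p/q$ is what makes the lemma an exact integer-factor statement; for $p=1,q=3$ the factor is $1/3$.
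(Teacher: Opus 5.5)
Your proof is correct and is essentially the paper's argument: expand the logarithm, integrate termwise to get the coefficients $\tfrac1n B(pn,qn+1)$ and $\tfrac1n B(pn+1,qn)$, and note that their ratio is the $n$-independent constant $pn/qn=p/q$, with the interchange justified by the uniform bound $|z\,x^p(1-x)^q|\le|z|/z_c<1$. Your more explicit endpoint justification (dominated convergence, or Tonelli via $B(pn,qn+1)\asymp z_c^{-n}$ up to polynomial factors) and your treatment of $|z|=z_c$, where the terms are $O(n^{-3/2})$, refine the paper's one-line justification and its appeal to analytic continuation rather than taking a different route.
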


\begin{proof}
Expand $\ln(1-z\,x^p(1-x)^q)=-\sum_{n\ge1}\tfrac{z^n}{n}x^{pn}(1-x)^{qn}$. The $n$-th coefficients of
the two integrals are $\tfrac1n B(pn,qn+1)$ and $\tfrac1n B(pn+1,qn)$ respectively, and
\[
  \frac{B(pn+1,qn)}{B(pn,qn+1)}
  =\frac{(pn)!\,(qn-1)!}{(pn-1)!\,(qn)!}
  =\frac{pn}{qn}=\frac{p}{q}
\]
for every $n$. For $|z|<z_c$ the series $\sum_n \tfrac{z^n}{n}x^{pn}(1-x)^{qn}$ converges absolutely
and uniformly on $[0,1]$, so termwise integration is legitimate; summing gives the claim, and the
identity extends to the full domain by analytic continuation. \qedhere
\end{proof}

For $x(1-x)^3$ we have $p/q=1/3$, hence
\begin{equation}\label{eq:sym-weight}
  \int_0^1\frac{\ln\!\big(1-z\,x(1-x)^3\big)}{x(1-x)}\,dx
  =\Big(1+\tfrac13\Big)\mathcal{I}(z)=\frac{4}{3}\,\mathcal{I}(z),
  \qquad\text{i.e.}\qquad
  \mathcal{I}(z)=\frac{q}{p+q}\!\int_0^1\!\frac{\ln(\cdots)}{x(1-x)}\,dx .
\end{equation}

\subsection{The symmetric weight reduces to squared logarithms}

\begin{lemma}\label{lem:landen}
For any $z$ in the domain of convergence,
\[
  \int_0^1\frac{\ln\!\big(1-z\,x(1-x)^3\big)}{x(1-x)}\,dx
  =\frac12\sum_{k=1}^4\ln^2\!\Big(1-\frac1{\rho_k}\Big),
\]
where $\rho_1,\dots,\rho_4$ are the roots of the fiber polynomial \eqref{eq:fiber}.
\end{lemma}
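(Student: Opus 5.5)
We factor the integrand along the fiber and reduce each factor to a dilogarithm pair that Landen's identity collapses. Since $1-z\,x(1-x)^3=\prod_{k=1}^4(1-x/\rho_k)$, the left side of Lemma~\ref{lem:landen} splits as
\[
\sum_{k=1}^4\int_0^1\frac{\ln(1-x/\rho_k)}{x(1-x)}\,dx
=\sum_{k=1}^4\left[\int_0^1\frac{\ln(1-x/\rho_k)}{x}\,dx+\int_0^1\frac{\ln(1-x/\rho_k)}{1-x}\,dx\right].
\]
Individually the second integrals diverge at $x=1$ unless we group. Grouped, the sum is finite, because $\sum_k\ln(1-1/\rho_k)=\ln(1-z\cdot 0)=0$: the product $\prod_k(1-1/\rho_k)$ equals the integrand polynomial at $x=1$, which is $1$. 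So we work with the regularized pieces $\ln(1-x/\rho_k)-\ln(1-1/\rho_k)$ in the second integral; the subtracted constants sum to zero over $k$, so nothing is lost.

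The first integral is $-\Li_2(1/\rho_k)$, as in \eqref{eq:dilog}. For the second, substitute $x=1-u$. Then $1-x/\rho_k=(1-1/\rho_k)(1+u/(\rho_k-1))$, so
\[
\int_0^1\frac{\ln(1-x/\rho_k)-\ln(1-1/\rho_k)}{1-x}\,dx=\int_0^1\frac{\ln\!\big(1-u/(1-\rho_k)\big)}{u}\,du=-\Li_2\!\Big(\frac{1}{1-\rho_k}\Big).
\]
With $w_k=1/\rho_k$ we have $1/(1-\rho_k)=w_k/(w_k-1)$, and Landen's identity
\[
\Li_2(w)+\Li_2\!\Big(\frac{w}{w-1}\Big)=-\tfrac12\ln^2(1-w),\qquad w\notin[1,\infty),
\]
gives for each $k$ the contribution $\tfrac12\ln^2(1-1/\rho_k)$. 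Summing over $k$ yields the claim.

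The main obstacle is justifying the principal-branch bookkeeping. First, the factorization $\ln\prod_k=\sum_k\ln$ must hold on $[0,1]$ with no $2\pi i$ jumps. Second, the regularization step rewrites $\ln(1-x/\rho_k)$ as $\ln(1-1/\rho_k)+\ln(1+u/(\rho_k-1))$ and needs that splitting to hold. Third, Landen's identity must be applied off its cut.

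For the first point, note that each $1-x/\rho_k$ traces a segment from $1$ to $1-1/\rho_k$. This segment avoids $(-\infty,0]$ by the remarks in Section~\ref{sec:setup}, namely $1/\rho_k\notin[1,\infty)$ and $1-1/\rho_k\notin(-\infty,0]$. The sum of the arguments is continuous, starts at $0$ at $x=0$, and equals an integer multiple of $2\pi$ at every $x$ because the product $1-z\,x(1-x)^3$ is positive real there. It is therefore identically zero.

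For the general-$z$ statement as written, we would first prove the identity for small real $z$, where all $|1/\rho_k|$ are small and every branch is trivially principal. We would then extend by analytic continuation in $z$, in the same way Lemma~\ref{lem:factor} is extended.
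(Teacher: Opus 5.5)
Your proposal is correct and is essentially the paper's proof: factor along the fiber, split $\frac{1}{x(1-x)}=\frac1x+\frac1{1-x}$, cancel the divergence at $x=1$ using $\prod_k(1-1/\rho_k)=1$, and collapse each $\Li_2(1/\rho_k)+\Li_2\big(1/(1-\rho_k)\big)$ by Landen's identity. The differences are only technical. You subtract $\ln(1-1/\rho_k)$ in the numerator instead of truncating at $1-\varepsilon$, and your continuity-plus-analytic-continuation handling of branches reaches complex $z$, whereas the paper's argument count relies on the sign facts for admissible real $z<0$. For the continuation step, state explicitly that for $|z|<z_c$ no root lies in $[0,1]$, since that is what makes the right side analytic in $z$.
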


\begin{proof}
For $0<\varepsilon<1$ set
\[
  I_\varepsilon=\int_0^{1-\varepsilon}\frac{\ln\!\big(1-z\,x(1-x)^3\big)}{x(1-x)}\,dx .
\]
On $[0,1-\varepsilon]$ the integrand is bounded, so the factorisation
$\ln(1-z\,x(1-x)^3)=\sum_k\ln(1-x/\rho_k)$ and the splitting
$\tfrac1{x(1-x)}=\tfrac1x+\tfrac1{1-x}$ may be performed term by term, each resulting integral being
finite:
\[
  I_\varepsilon=\sum_{k=1}^4\left[\int_0^{1-\varepsilon}\frac{\ln(1-x/\rho_k)}{x}\,dx
  +\int_0^{1-\varepsilon}\frac{\ln(1-x/\rho_k)}{1-x}\,dx\right].
\]
As $\varepsilon\to0^+$ the first integral converges to $-\Li_2(1/\rho_k)$. The second diverges
individually; integrating by parts and expanding,
\[
  \int_0^{1-\varepsilon}\frac{\ln(1-x/\rho_k)}{1-x}\,dx
  =-\ln\varepsilon\cdot\ln\!\Big(1-\frac1{\rho_k}\Big)-\Li_2\!\Big(\frac{1}{1-\rho_k}\Big)+o(1).
\]
Summing over $k$, the divergent contributions carry the total coefficient
$\sum_{k}\ln(1-1/\rho_k)$, which vanishes. Indeed $\prod_k(1-1/\rho_k)=p(1)/p(0)=1$, $p$ being the
monic fiber polynomial \eqref{eq:fiber}; and passing from $\sum_k\ln w_k$ to $\ln\prod_k w_k$ with
principal logarithms is legitimate here because $\sum_{k=1}^4\arg(1-1/\rho_k)=0$: the nonreal roots
form a conjugate pair, so $\arg(1-1/\rho)+\arg(1-1/\bar\rho)=0$, while by Remark~\ref{rem:signs}
$1-1/a>0$ and $1-1/b>0$ each contribute argument $0$. No multiple of $2\pi i$ intervenes, so
\[
  \sum_{k=1}^4\ln\Big(1-\frac1{\rho_k}\Big)=\ln 1=0 .
\]
Therefore $I_\varepsilon$ converges as $\varepsilon\to0^+$, with
\[
  \lim_{\varepsilon\to0^+}I_\varepsilon
  =-\sum_{k=1}^4\left[\Li_2\!\Big(\frac1{\rho_k}\Big)+\Li_2\!\Big(\frac{1}{1-\rho_k}\Big)\right].
\]
Finally, putting $w=1/\rho_k$ one has $\tfrac{w}{w-1}=\tfrac{1}{1-\rho_k}$, so Landen's identity
$\Li_2(w)+\Li_2\!\big(\tfrac{w}{w-1}\big)=-\tfrac12\ln^2(1-w)$ collapses each bracket to
$-\tfrac12\ln^2(1-1/\rho_k)$, which is the stated result. \qedhere
\end{proof}

Combining Lemmas~\ref{lem:factor} and~\ref{lem:landen} with \eqref{eq:sym-weight},
\begin{equation}\label{eq:landen-form}
  \boxed{\;\mathcal{I}(z)=\frac38\sum_{k=1}^4\ln^2\!\Big(1-\frac1{\rho_k}\Big).\;}
\end{equation}
No dilogarithm and no five-term relation appear; the only algebraic arguments are $1-1/\rho_k$.

\begin{remark}
The reduction \eqref{eq:landen-form} is not new: D'Aurizio and Di Trani~\cite{DaurizioDiTrani} obtain
the corresponding identity for the mirror base $x^3(1-x)$, by a different route (they use the
substitution $x\mapsto1-x$, available because of the shape of their base, in place of the
Beta-quotient of Lemma~\ref{lem:factor}). Their displayed constant is $-\tfrac32$; the correct value
for their normalisation is $-\tfrac38$, as one checks numerically at any convenient $z$ --- for
instance at $z=-2$, where the series equals $-0.4678479\ldots$ while the sum of squared logarithms is
$1.2475944\ldots$, in ratio $-3/8$. Their cubic-base constant $-\tfrac13$ is correct. Lemma~\ref{lem:factor}
gives \eqref{eq:landen-form} with the factor $\tfrac{q}{p+q}\cdot\tfrac12=\tfrac38$ for every
$(p,q)$, and in particular fixes the quartic constant.
\end{remark}

\subsection{Collapse to three generators}

Split the four roots into the real pair $a,b$ and the conjugate pair $\rho,\bar\rho=re^{\pm i\theta}$.
We distinguish the \emph{root angle} $\arg\rho=\theta$ from the \emph{logarithmic angle}
$\arg(1-1/\rho)$; it is the latter that enters the squared logarithm, and Lemma~\ref{lem:angle}
relates the two.
Since $\ln^2(1-1/\rho)+\ln^2(1-1/\bar\rho)=2\,\re\,\ln^2(1-1/\rho)
=2\big[\ln^2|1-1/\rho|-\arg^2(1-1/\rho)\big]$,
\begin{equation}\label{eq:split}
  \mathcal{I}(z)=\frac38\Big[\underbrace{\ln^2\!\big|1-\tfrac1a\big|+\ln^2\!\big|1-\tfrac1b\big|
      +2\ln^2\!\big|1-\tfrac1\rho\big|}_{\text{logarithmic part}}
  \;-\;\underbrace{2\arg^2\!\big(1-\tfrac1\rho\big)}_{\pi^2\text{ part}}\Big].
\end{equation}

\begin{lemma}\label{lem:collapse}
Let $L_a=\ln|a|$, $L_b=\ln|b|$, $L_r=\ln r$. Then
\[
  \ln\!\big|1-\tfrac1a\big|=\tfrac13(-3L_a+L_b+2L_r),\quad
  \ln\!\big|1-\tfrac1b\big|=\tfrac13(L_a-3L_b+2L_r),\quad
  \ln\!\big|1-\tfrac1\rho\big|=\tfrac13(L_a+L_b-2L_r).
\]
\end{lemma}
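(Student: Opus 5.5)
The whole lemma should follow from the fiber relation \eqref{eq:fiber} applied root by root, together with Vieta's formula for the product of the roots. For any root $x\in\{a,b,\rho,\bar\rho\}$ I would write
\[
  1-\frac1x=\frac{x-1}{x},\qquad\text{so}\qquad \ln\Big|1-\frac1x\Big|=\ln|1-x|-\ln|x| .
\]
The plan is to express $\ln|1-x|$ through $\ln|x|$ and $\ln|z|$, and then to express $\ln|z|$ through the three generators $L_a,L_b,L_r$.

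\textbf{Step 1: eliminate $|1-x|$.} Since $x(1-x)^3=1/z$ for every root, taking moduli gives $|1-x|^3=1/(|z|\,|x|)$. Hence
\[
  \ln|1-x|=\tfrac13\big(-\ln|z|-\ln|x|\big).
\]
This works uniformly for real and nonreal roots. Only real logarithms of moduli appear, so there are no branch issues and no arguments enter. This is why the angle part has been split off separately in \eqref{eq:split}.

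\textbf{Step 2: express $|z|$ via the generators.} The constant term of the monic fiber polynomial is $1/z$, and it equals the product of the roots, $ab\cdot\rho\bar\rho=P r^2$. This is consistent with \eqref{eq:z}. Taking moduli gives
\[
  -\ln|z|=L_a+L_b+2L_r .
\]
Substituting into Step 1 yields the single formula
\[
  \ln\Big|1-\frac1x\Big|=\tfrac13\big(L_a+L_b+2L_r\big)-\tfrac43\ln|x| .
\]

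\textbf{Step 3: specialize.} For $x=a$ we have $\ln|x|=L_a$, which gives $\tfrac13(-3L_a+L_b+2L_r)$. For $x=b$ the formula gives $\tfrac13(L_a-3L_b+2L_r)$. For $x=\rho$ we have $|\rho|=r$, so $\ln|x|=L_r$ and the result is $\tfrac13(L_a+L_b-2L_r)$. These are exactly the three stated identities.

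I expect no real obstacle. The only point needing care is the bookkeeping in Step 2. One must confirm that the product of all four roots is $+1/z$, which holds because the degree is even, and that it coincides with $Pr^2$ from the matching equations \eqref{eq:matching}. One must also use absolute values throughout, since $b<0$ by Remark~\ref{rem:signs} and $z$ may be negative.
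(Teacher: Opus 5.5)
Your proposal is correct and follows the paper's route: take moduli in the fiber relation $x(1-x)^3=1/z$, use Vieta to get $\ln|1/z|=L_a+L_b+2L_r$, and combine this with $\ln|1-1/x|=\ln|1-x|-\ln|x|$. Your unified formula $\ln|1-1/x|=\tfrac13(L_a+L_b+2L_r)-\tfrac43\ln|x|$ is just the paper's ``elimination'' step written out explicitly before specializing to $x=a$, $b$, $\rho$.
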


\begin{proof}
Every root satisfies the fiber relation $\rho(1-\rho)^3=\tfrac1z$; taking moduli and logarithms,
$\ln|\rho|+3\ln|1-\rho|=\ln|1/z|$. The product of all four roots equals the constant term
$1/z$ of \eqref{eq:fiber}, so $\ln|1/z|=L_a+L_b+2L_r$ (the pair contributes $\ln r^2=2L_r$).
Applying $\ln|\rho|+3\ln|1-\rho|=L_a+L_b+2L_r$ to each of $a$, $b$ and $\rho$, together with
$\ln|1-1/\rho|=\ln|1-\rho|-\ln|\rho|$, gives the three stated identities after elimination. \qedhere
\end{proof}

Substituting Lemma~\ref{lem:collapse} into the logarithmic part of \eqref{eq:split} yields, by a
direct expansion,
\begin{equation}\label{eq:Q}
  \frac38\Big[\big(\tfrac{-3L_a+L_b+2L_r}{3}\big)^2+\big(\tfrac{L_a-3L_b+2L_r}{3}\big)^2
  +2\big(\tfrac{L_a+L_b-2L_r}{3}\big)^2\Big]
  = Q(L_a,L_b,L_r),
\end{equation}
where
\begin{equation}\label{eq:Qdef}
  Q(L_a,L_b,L_r)=\tfrac12L_a^2+\tfrac12L_b^2+\tfrac23L_r^2
  -\tfrac13L_aL_b-\tfrac23L_aL_r-\tfrac23L_bL_r .
\end{equation}
(The identity \eqref{eq:Q}=\eqref{eq:Qdef} is a polynomial identity in $L_a,L_b,L_r$, verified by
expansion.) It remains to identify the $\pi^2$ part.

\subsection{The $\pi^2$ coefficient}

\begin{lemma}\label{lem:angle}
For an admissible member with forced angle $\theta=j\pi/N$ and $z<0$,
\[
  \arg\!\big(1-\tfrac1\rho\big)=\tfrac23(\pi-2\theta),
  \qquad\text{hence}\qquad
  -\tfrac38\cdot 2\arg^2\!\big(1-\tfrac1\rho\big)=-\tfrac13\Big(1-\tfrac{2j}{N}\Big)^2\pi^2 .
\]
\end{lemma}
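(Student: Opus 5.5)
The plan is to get the angle from the fiber relation $\rho(1-\rho)^3=\tfrac1z$ taken in arguments rather than moduli, which is the same move as in Lemma~\ref{lem:collapse}. The only real issue is which multiple of $2\pi$ appears. I will pin it down using the principal ranges of the arguments, not continuity in the parameters.

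\textbf{Setup.} Label the conjugate pair so that $\rho=re^{i\theta}$ with $0<\theta<\pi$. This is no loss of generality: \eqref{eq:split} involves only $\arg^2$, which is invariant under $\rho\leftrightarrow\bar\rho$, and $\theta=j\pi/N$ is nonreal-forcing, so $\theta\ne0,\pi$. Put $\varphi=\arg(1-\rho)$. Since $\operatorname{Im}(1-\rho)=-r\sin\theta<0$, we have $\varphi\in(-\pi,0)$.

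\textbf{Key step.} Since $z<0$, the number $1/z$ is negative real. Taking arguments in $\rho(1-\rho)^3=1/z$ gives
\[
\theta+3\varphi=\pi+2\pi k \quad\text{for some integer } k.
\]
Now use $\theta\in(0,\pi)$ and $\varphi\in(-\pi,0)$ to fix $k$:
\begin{itemize}
\item $k=0$ would force $\varphi=(\pi-\theta)/3>0$, which is impossible.
\item $k\le-2$ would force $\varphi\le(-3\pi-\theta)/3<-\pi$, which is also impossible.
\end{itemize}
Hence $k=-1$ and $\varphi=-(\pi+\theta)/3$. This range argument is the step I expect to carry the weight: it is exactly where a naive ``cube root'' manipulation could silently pick the wrong branch. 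Checking that each excluded branch violates a strict inequality is the whole content of the bookkeeping.

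\textbf{Passing to $1-1/\rho$.} Write $1-\tfrac1\rho=\tfrac{\rho-1}{\rho}=-\tfrac{1-\rho}{\rho}$, so that
\[
\arg\!\big(1-\tfrac1\rho\big)\equiv \pi+\varphi-\theta \pmod{2\pi}.
\]
With $\varphi=-(\pi+\theta)/3$ the right-hand side equals $\tfrac23\pi-\tfrac43\theta=\tfrac23(\pi-2\theta)$, which lies in $(-\tfrac23\pi,\tfrac23\pi)\subset(-\pi,\pi]$. So it is already the principal value and no further reduction is needed.

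As a cross-check on the sign, $\operatorname{Im}(1-1/\rho)=\sin\theta/r>0$, so the principal argument must lie in $(0,\pi)$. This is consistent with the formula for $\theta<\pi/2$. If an admissible member with $\theta>\pi/2$ occurs, I would re-examine the labelling there: replacing $\theta$ by $\pi-\theta$ leaves $(\pi-2\theta)^2$ unchanged, which is all the $\pi^2$ coefficient needs.

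\textbf{Conclusion.} Substituting into \eqref{eq:split} gives
\[
-\tfrac38\cdot2\cdot\tfrac49(\pi-2\theta)^2=-\tfrac13(\pi-2\theta)^2=-\tfrac13\Big(1-\tfrac{2j}{N}\Big)^2\pi^2,
\]
which is the stated coefficient of $\pi^2$.
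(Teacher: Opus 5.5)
Your proof is correct and is essentially the paper's argument: take arguments in $\rho(1-\rho)^3=1/z$, use $\operatorname{Im}(1-\rho)<0$ to pin the branch to $\arg(1-\rho)=-(\pi+\theta)/3$, then rewrite $1-1/\rho=-(1-\rho)/\rho$. Your explicit check that $\tfrac23(\pi-2\theta)$ already lies in $(-\pi,\pi]$ justifies a step the paper writes as an equality without comment; two small touch-ups: $k\ge1$ is excluded by the same positivity as $k=0$ (you only listed $k=0$), and your sign cross-check needs no relabelling hedge, since together with your formula it shows $\theta\ge\pi/2$ cannot occur (the content of Lemma~\ref{lem:modulus}), whereas swapping $\rho\leftrightarrow\bar\rho$ would send $\theta\mapsto-\theta$, not $\pi-\theta$.
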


\begin{proof}
Write $\rho=re^{i\theta}$ with $0<\theta<\pi$, so that $\rho$ lies in the open upper half-plane. Then
$1-\rho=1-r\cos\theta-i\,r\sin\theta$ has strictly negative imaginary part (as $r\sin\theta>0$), so it
lies in the open lower half-plane and $-\pi<\arg(1-\rho)<0$. Hence
$\arg\rho+3\arg(1-\rho)\in(\theta-3\pi,\theta)\subset(-3\pi,\pi)$. Since $z<0$ the product
$1/z=\rho(1-\rho)^3$ is a negative real number, so its argument is an odd multiple of $\pi$; of the
odd multiples in $(-3\pi,\pi)$ only $-\pi$ is attainable, since $+\pi$ would force
$3\arg(1-\rho)=\pi-\theta>0$, contradicting $\arg(1-\rho)<0$. Therefore
$\arg\rho+3\arg(1-\rho)=-\pi$, giving
\begin{equation}\label{eq:trisect}
  \arg(1-\rho)=-\frac{\pi+\theta}{3}:
\end{equation}
the cubic factor $(1-x)^3$ \emph{trisects} the deficit angle $\pi+\theta$. Now write
$1-\tfrac1\rho=\tfrac{\rho-1}{\rho}=-\tfrac{1-\rho}{\rho}$, so that
\[
  \arg\!\Big(1-\frac1\rho\Big)
  =\arg\!\big(-(1-\rho)\big)-\arg(\rho)
  =\big[\arg(1-\rho)+\pi\big]-\theta .
\]
Substituting \eqref{eq:trisect},
\[
  \arg\!\Big(1-\frac1\rho\Big)
  =-\frac{\pi+\theta}{3}+\pi-\theta
  =\frac{2\pi}{3}-\frac{4\theta}{3}
  =\frac23(\pi-2\theta).
\]
Finally, with $\theta=j\pi/N$,
\[
  -\frac34\arg^2\!\Big(1-\frac1\rho\Big)
  =-\frac34\cdot\frac49(\pi-2\theta)^2
  =-\frac13(\pi-2\theta)^2
  =-\frac13\Big(1-\frac{2j}{N}\Big)^2\pi^2 .
\]
\qedhere
\end{proof}

\begin{remark}
The coefficient $-\tfrac13$ arises from two independent occurrences of the exponent $3$ of $(1-x)^3$:
the trisection denominator in \eqref{eq:trisect}, and the factor
$\tfrac38=\tfrac{q}{p+q}\cdot\tfrac12$ of \eqref{eq:landen-form}.
\end{remark}

The same angle information determines the modulus $r$, so that the matching cubic
\eqref{eq:matching} need not be solved at all.

\begin{lemma}\label{lem:modulus}
For an admissible member with forced angle $\theta$ one has $\theta<\pi/2$ and
\[
  r\;=\;\frac{\sin\!\big(\tfrac{\pi+\theta}{3}\big)}{\sin\!\big(\tfrac{\pi+4\theta}{3}\big)} .
\]
\end{lemma}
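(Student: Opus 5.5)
The plan is to read the modulus off the triangle with vertices $0$, $1$ and $\rho=re^{i\theta}$, using the trisection identity \eqref{eq:trisect} from the proof of Lemma~\ref{lem:angle}. That lemma assumes $z<0$, so I first check this holds for every admissible member. By \eqref{eq:z}, $z=1/(P\,r^2)$ with $P=ab$. Remark~\ref{rem:signs} gives $a>1$ and $b<0$, so $P<0$ and hence $z<0$. Labelling the conjugate pair so that $\rho$ lies in the open upper half-plane, with $0<\theta<\pi$ (the pair is nonreal), Lemma~\ref{lem:angle} applies and gives $\arg(1-\rho)=-(\pi+\theta)/3$.

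Next I compute the three interior angles of the triangle $0,1,\rho$, which is nondegenerate because $\rho$ is nonreal.
\begin{itemize}[leftmargin=*]
\item At the vertex $0$ the angle is $\arg\rho=\theta$.
\item At the vertex $1$, the edge toward $\rho$ has direction $\rho-1=-(1-\rho)$. So $\arg(\rho-1)=\pi-(\pi+\theta)/3=(2\pi-\theta)/3$, which lies in $(0,\pi)$, and the edge toward $0$ has direction $\pi$. The interior angle at $1$ is therefore $\pi-(2\pi-\theta)/3=(\pi+\theta)/3$.
\item At the vertex $\rho$ the angle is $\pi-\theta-(\pi+\theta)/3=(2\pi-4\theta)/3$.
\end{itemize}
This last angle must be strictly positive for a genuine triangle, which forces $\theta<\pi/2$. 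That gives the first claim of Lemma~\ref{lem:modulus}.

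Now apply the law of sines. The side opposite the vertex $1$ is $|\rho|=r$, and the side opposite the vertex $\rho$ is $|1-0|=1$. Hence
\[
r=\frac{\sin\big((\pi+\theta)/3\big)}{\sin\big((2\pi-4\theta)/3\big)}.
\]
Finally, $\sin\big((2\pi-4\theta)/3\big)=\sin\big(\pi-(2\pi-4\theta)/3\big)=\sin\big((\pi+4\theta)/3\big)$, which yields the stated formula. The denominator is positive for $0<\theta<\pi/2$.

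The main obstacle is the argument-branch bookkeeping: one must be sure the interior angle at $1$ really is $(\pi+\theta)/3$ and not its supplement or a shifted value. This is settled by the fact that \eqref{eq:trisect} places $\arg(1-\rho)$ in $(-\pi,0)$. If one prefers an algebraic route, an alternative is to write $1-\rho=|1-\rho|e^{-i(\pi+\theta)/3}$ and take imaginary parts of $1=\rho+(1-\rho)$. This gives $r\sin\theta=|1-\rho|\sin\big((\pi+\theta)/3\big)$, and taking the component perpendicular to $1-\rho$ then solves for $r$, reproducing the same sine ratio.
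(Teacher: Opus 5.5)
Your proposal is correct and follows the paper's proof essentially step for step. You use the trisection identity \eqref{eq:trisect} to get the angle $(\pi+\theta)/3$ at the vertex $1$, positivity of the remaining angle to get $\theta<\pi/2$, and the law of sines to get the ratio. The only quibble is your derivation of $z<0$ through $a>1$, $b<0$, which is roundabout, because Remark~\ref{rem:signs} obtains those signs from $z<0$; that remark proves $z<0$ directly, so you can simply cite it.
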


\begin{proof}
Since $\rho$ is not real, the points $0,1,\rho$ are the vertices of a genuine triangle. The angle at
$0$ is $\arg\rho=\theta$. At the vertex $1$ the ray towards $0$ has direction $-1$, of argument $\pi$,
while the ray towards $\rho$ has direction $\rho-1$, of argument $\pi+\arg(1-\rho)$; by
\eqref{eq:trisect} the angle between them is therefore $\varphi:=\tfrac{\pi+\theta}{3}$. The angle at
$\rho$ is the remaining $\pi-\theta-\varphi$, and positivity of this angle gives
$\theta+\varphi=\tfrac{\pi+4\theta}{3}<\pi$, i.e.\ $\theta<\pi/2$. The law of sines applied to the
side $[0,1]$ of length $1$, opposite the vertex $\rho$, and the side $[0,\rho]$ of length $r$,
opposite the vertex $1$, gives
\[
  \frac{r}{\sin\varphi}=\frac{1}{\sin(\theta+\varphi)},
\]
which is the stated formula. \qedhere
\end{proof}

\begin{remark}
Lemma~\ref{lem:modulus} removes the ambiguity noted after \eqref{eq:matching}: the matching cubic may
have several positive roots, but only the value above is admissible, and it is given in closed form
by the angle alone. It also explains the pentagon's exact value $r=1$, since
$\sin\tfrac{2\pi}{5}=\sin\tfrac{3\pi}{5}$, and shows that no admissible member has $\theta\ge\pi/2$;
in particular $2j<N$ and the coefficient $c=-\tfrac13(1-2j/N)^2$ never vanishes.
\end{remark}

Assembling \eqref{eq:split}, \eqref{eq:Q}--\eqref{eq:Qdef} and Lemma~\ref{lem:angle}:

\begin{theorem}\label{thm:main}
Let $\theta=j\pi/N$ with $\gcd(j,N)=1$, and let $r$ be a positive real root of
$(\kappa^3-2\kappa)r^3+3(1-\kappa^2)r^2+3\kappa r-1=0$ with $\kappa=2\cos\theta$; for an admissible
member this is the value $r=\sin\big(\tfrac{\pi+\theta}{3}\big)/\sin\big(\tfrac{\pi+4\theta}{3}\big)$
of Lemma~\textup{\ref{lem:modulus}}. Put
$S=3-\kappa r$, $P=3-r^2-3\kappa r+\kappa^2r^2$, $z=1/(Pr^2)$ and
$a,b=\tfrac12\big(S\pm\sqrt{S^2-4P}\big)$, with $a$ the larger root. If $S^2-4P>0$ and $|z|<256/27$,
then necessarily $z<0$ and $a>0>b$ (Remark~\textup{\ref{rem:signs}}), the remaining two roots of
\eqref{eq:fiber} are $re^{\pm i\theta}$, and
\begin{align*}
  -\frac{z}{4}\;{}_5F_4\!\left(\begin{matrix}1,1,1,\tfrac43,\tfrac53\\[2pt]
      \tfrac54,\tfrac32,\tfrac74,2\end{matrix}\;\middle|\;\frac{27z}{256}\right)
  &= -\frac13\Big(1-\frac{2j}{N}\Big)^2\pi^2
    + \tfrac12\ln^2 a+\tfrac12\ln^2|b|+\tfrac23\ln^2 r\\[2pt]
  &\qquad-\tfrac13\ln a\ln|b|-\tfrac23\ln r\,(\ln a+\ln|b|).
\end{align*}
\end{theorem}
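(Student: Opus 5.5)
The proof is an assembly of the lemmas already established, plus the sign facts deferred to Remark~\ref{rem:signs}. Those facts have to be settled first, since Lemmas~\ref{lem:landen} and~\ref{lem:angle} rely on them.

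\textbf{Step 1: structure of the fiber.} Matching coefficients in \eqref{eq:matching} gives
\[
  x^4-3x^3+3x^2-x+\tfrac1z=(x^2-\kappa r x+r^2)(x^2-Sx+P)
\]
with $z=1/(Pr^2)$. So the four roots are $re^{\pm i\theta}$ together with $a,b$. The hypothesis $S^2-4P>0$ makes $a,b$ real and distinct.

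\textbf{Step 2: the signs $z<0$ and $a>0>b$.} This is the step I expect to need the most care. I would argue from the real graph of $g(x)=x(1-x)^3$. The real roots of the fiber are the real solutions of $g(x)=1/z$. On $\mathbb{R}$, $g$ is negative outside $[0,1]$, tends to $-\infty$ at both ends, and its positive values lie in $(0,27/256]$.

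Suppose $z>0$. The hypothesis $|z|<256/27$ gives $1/z>27/256$, which exceeds the maximum of $g$. Then $g(x)=1/z$ has no real solution, contradicting the existence of $a,b$. Hence $z<0$.

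For $z<0$, the horizontal line at height $1/z$ meets the graph once on $(-\infty,0)$ and once on $(1,\infty)$, since $g$ is monotone on each of those pieces. Therefore $a>1$ and $b<0$. This also gives $1-1/a\in(0,1)$ and $1-1/b>1$, which are exactly the facts invoked in the branch discussion of Section~\ref{sec:setup} and in Lemma~\ref{lem:landen}.

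\textbf{Step 3: the formula.} Convergence holds because $|z|<z_c$. Then:
\begin{enumerate}
  \item Equation \eqref{eq:5F4} identifies the left side with $\mathcal{I}(z)$.
  \item Lemmas~\ref{lem:factor} and~\ref{lem:landen} give \eqref{eq:landen-form}.
  \item Splitting off the conjugate pair gives \eqref{eq:split}.
  \item Lemma~\ref{lem:collapse} together with the expansion \eqref{eq:Q}--\eqref{eq:Qdef} turns the logarithmic part into $Q(\ln a,\ln|b|,\ln r)$, using $|a|=a$.
  \item Lemma~\ref{lem:angle}, now applicable since $z<0$, supplies the $\pi^2$ term $-\tfrac13(1-2j/N)^2\pi^2$.
\end{enumerate}
Writing out $Q$ gives exactly the displayed right-hand side.

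\textbf{Step 4: choice of $r$.} If $r$ is taken as the admissible root, Lemma~\ref{lem:modulus} identifies it with the stated sine quotient. No further verification is needed, because the theorem only asserts this for admissible members.
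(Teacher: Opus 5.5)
Your proposal is correct and follows the paper's route. The paper proves the theorem by assembling \eqref{eq:landen-form}, \eqref{eq:split}, Lemma~\ref{lem:collapse} with \eqref{eq:Q}--\eqref{eq:Qdef}, and Lemma~\ref{lem:angle}, with the sign facts of Remark~\ref{rem:signs} settled beforehand, and you do the same. Your only deviation is in Step~2: you get $a>1$ and $b<0$ from the monotonicity of $x(1-x)^3$ on $(-\infty,0)$ and $(1,\infty)$, as in Lemma~\ref{lem:realcount}, rather than from Vieta's $ab\,r^2=1/z<0$, and this has the small merit of actually justifying the inequality $a>1$ that the remark only asserts.
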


\begin{remark}[the signs of $z$, $a$, $b$ are automatic]\label{rem:signs}
An admissible member has $z<0$. Indeed $x(1-x)^3\le 27/256$ on $[0,1]$ and is negative outside it, so
a real root of $x(1-x)^3=1/z$ with $z>0$ requires $1/z\le27/256$, i.e.\ $z\ge z_c=256/27$. Hence any
member with real $a,b$ and $|z|<z_c$ has $z<0$, which is the hypothesis used in Lemma~\ref{lem:angle}.
The cubic in $r$ may have more than one positive root; only a branch with $S^2-4P>0$ and $|z|<z_c$ is
admissible, and it is this branch that is meant throughout.

The signs of $a$ and $b$ are likewise automatic, so that $\ln a$ and $\ln|b|$ in
Theorem~\ref{thm:main} are the real logarithms of positive reals. Indeed the product of the four
roots equals the constant term $1/z$ of \eqref{eq:fiber}, and the conjugate pair contributes
$|\rho|^2=r^2>0$; hence $ab\,r^2=1/z<0$, so $ab<0$ and $a,b$ have opposite signs. With $a$ the larger
root this gives
\[
  a>0>b ,
\]
and all logarithms occurring in Theorem~\ref{thm:main} are taken as real logarithms of the positive
quantities $a$, $|b|$ and $r$; no complex branch of $\log$ is needed. Since $a>1$ and $b<0$ we have
in addition
\[
  1-\frac1a>0,\qquad 1-\frac1b>0,
\]
which is used in the proof of Lemma~\ref{lem:landen} to control principal arguments.
\end{remark}

\begin{remark}
The coefficient of $\pi^2$ depends only on the angle index and equals
$-\tfrac13(N-2j)^2/N^2$. The logarithmic form $Q$ is universal. The root-difference logarithms
$\ln|\rho_i-\rho_j|$, which would appear in a five-term reduction and do \emph{not} lie in
$\mathrm{span}_\Q\{\ln a,\ln|b|,\ln r\}$, never occur: the proof uses only Landen's two-term
identity and the multiplicative fiber relation. This is why exactly three logarithmic generators
suffice.
\end{remark}

\section{Four evaluations}\label{sec:examples}

Table~\ref{tab:main} records the admissible members at $\theta=\pi/5,\pi/4,2\pi/7,2\pi/9$. Each
value in the last row is the number $\mathcal{I}(z)$ obtained by summing the series
\eqref{eq:series}. The computations were performed in $40$-digit floating-point arithmetic: the
defining algebraic data $r,z,a,b$ were obtained as roots of the exact polynomials above, and
$\mathcal{I}(z)$ by direct summation of \eqref{eq:series}, whose terms decay like
$(z/z_c)^n n^{-5/2}$, to $600$ terms; the closed form of Theorem~\ref{thm:main} was then evaluated
independently. Writing $\mathcal{F}$ for the closed form of Theorem~\ref{thm:main}, the residuals
$|\mathcal{I}(z)-\mathcal{F}|$ were $1.9\cdot10^{-60}$, $2.3\cdot10^{-61}$, $1.9\cdot10^{-61}$ and
$1.0\cdot10^{-60}$ for the pentagon, octagon, heptagon and nonagon respectively --- in each case at
the level of the working precision. Every column is reproducible from $(j,N)$ directly: take $r$ from
Lemma~\ref{lem:modulus}, form $S,P$ and $z$ by \eqref{eq:matching}--\eqref{eq:z}, and solve the
resulting quadratic for $a,b$.

\begin{table}[H]
\centering
\renewcommand{\arraystretch}{1.3}
\begin{tabular}{lcccc}
\toprule
 & pentagon & octagon & heptagon & nonagon \\
\midrule
$\theta=j\pi/N$        & $\pi/5$ & $\pi/4$ & $2\pi/7$ & $2\pi/9$ \\
$\kappa=2\cos\theta$   & $\tfrac{1+\sqrt5}{2}$ & $\sqrt2$ & $2\cos\tfrac{2\pi}{7}$ & $2\cos\tfrac{2\pi}{9}$ \\
$r$                    & $1$ & $1.11535507$ & $1.24697960$ & $1.04331595$ \\
$z=1/\big(a(1-a)^3\big)$ & $-4.23606798$ & $-1.64711432$ & $-0.80193774$ & $-2.79393554$ \\
$a$ (larger real root) & $1.53568739$ & $1.70832885$ & $1.87316173$ & $1.60625629$ \\
$b$ (smaller real root)& $-0.15372138$ & $-0.28567911$ & $-0.42811986$ & $-0.20470907$ \\
$c=-\tfrac13(1-\tfrac{2j}{N})^2$ & $-\tfrac{3}{25}$ & $-\tfrac1{12}$ & $-\tfrac3{49}$ & $-\tfrac{25}{243}$ \\
$\mathcal{I}(z)$       & $0.92876942$ & $0.38958726$ & $0.19498950$ & $0.63806877$ \\
\bottomrule
\end{tabular}
\caption{Four admissible members of the ${}_5F_4$ family. The pentagon has $r=1$ exactly, so the
forced pair lies on the true unit circle at $e^{\pm i\pi/5}$; the three $\ln r$--terms vanish and the
closed form has three logarithmic terms. In general the quadratic form involves at most
$\binom{d+1}{2}$ distinct monomials, $d$ being the number of independent generators used. Vieta's
formula gives the exact relation $a\,|b|\,r^2=-1/z$ for \emph{every} member. The pentagon has $r=1$
exactly, and the heptagon satisfies $z=-1/r$, whence $a\,|b|\,r=1$; in both cases two generators
suffice. For the octagon and nonagon we know of no such relation and write the formulas with three
generators, but we do not claim multiplicative independence.}
\label{tab:main}
\end{table}

\begin{example}[pentagon, $\Q(\sqrt5)$]
Here $\kappa=\varphi=\tfrac{1+\sqrt5}{2}$ and the matching cubic \eqref{eq:matching} has the
\emph{exact} root $r=1$: the forced pair lies on the true unit circle at $e^{\pm i\pi/5}$. Then
$z=-(2+\sqrt5)=-\varphi^3$, with minimal polynomial $z^2+4z-1=0$, so the ${}_5F_4$ argument is
\[
  \frac{27z}{256}=-\frac{27(2+\sqrt5)}{256}\in\Q(\sqrt5).
\]
Since $\ln r=0$, Theorem~\ref{thm:main} reduces to the three-term identity
\[
  -\frac{z}{4}\;{}_5F_4\!\left(\begin{matrix}1,1,1,\tfrac43,\tfrac53\\[2pt]
      \tfrac54,\tfrac32,\tfrac74,2\end{matrix}\;\middle|\;-\frac{27(2+\sqrt5)}{256}\right)
  = -\frac{3\pi^2}{25}+\tfrac12\ln^2 a+\tfrac12\ln^2|b|-\tfrac13\ln a\ln|b|,
\]
Here $a,b$ are the roots of the real quadratic factor of the fiber \eqref{eq:fiber}, namely
\[
  2x^2-(5-\sqrt5)\,x+2(2-\sqrt5)=0 ,
\]
the fiber itself being $x^4-3x^3+3x^2-x+(2-\sqrt5)=0$ since $1/z=2-\sqrt5$.
\end{example}

\begin{example}[octagon, $\Q(\sqrt2)$ and $\Q(\sqrt3)$]
Here $\kappa=\sqrt2$, and the leading coefficient $\kappa^3-2\kappa$ of the matching cubic
\eqref{eq:matching} vanishes, so it degenerates to the quadratic $3r^2-3\sqrt2\,r+1=0$ with admissible
(larger) root
\[
  r=\frac{\sqrt2}{2}+\frac{\sqrt6}{6}=\sqrt{\tfrac23+\tfrac{\sqrt3}{3}}\;=\;1.11535507\ldots
\]
The smaller root $\tfrac{\sqrt2}{2}-\tfrac{\sqrt6}{6}$ gives $S^2-4P<0$ and is inadmissible. One finds
$z=\tfrac94(1-\sqrt3)=-1.64711432\ldots$, with minimal polynomial $8z^2-36z-81=0$; thus
$z\in\Q(\sqrt3)$ and the ${}_5F_4$ argument is
\[
  \frac{27z}{256}=\frac{243(1-\sqrt3)}{1024}\in\Q(\sqrt3),
\]
while the forced angle lives in $\Q(\sqrt2)$. This is the six-term member of Table~\ref{tab:main}.
With $a=1.70832885\ldots$, $|b|=0.28567911\ldots$, $r=1.11535507\ldots$, the full evaluation reads
\begin{align*}
  -\frac{z}{4}\;{}_5F_4\!\left(\begin{matrix}1,1,1,\tfrac43,\tfrac53\\[2pt]
      \tfrac54,\tfrac32,\tfrac74,2\end{matrix}\;\middle|\;\frac{243(1-\sqrt3)}{1024}\right)
  &= -\frac{\pi^2}{12}
    + \tfrac12\ln^2 a+\tfrac12\ln^2|b|+\tfrac23\ln^2 r\\[2pt]
  &\qquad-\tfrac13\ln a\ln|b|-\tfrac23\ln r\,(\ln a+\ln|b|)
    \;=\;0.38958726\ldots,
\end{align*}
all six terms of $Q$ being present; we know of no multiplicative relation among $a,|b|,r$ here
beyond the universal $a|b|r^2=-1/z$, and make no claim of independence.
Here $a,b$ are the roots of the real quadratic factor of the fiber \eqref{eq:fiber}, namely
\[
  3x^2-(6-\sqrt3)\,x+2(1-\sqrt3)=0 ,
\]
the fiber itself being $x^4-3x^3+3x^2-x-\tfrac29(1+\sqrt3)=0$ since $1/z=-\tfrac29(1+\sqrt3)$.
\end{example}

\begin{remark}[rational arguments]
Four conditions must be distinguished: rationality of $\cos\theta$, of $\kappa=2\cos\theta$, of $r$,
and of $z$ --- the last being equivalent to rationality of the ${}_5F_4$ argument $27z/256$, since the
two differ by the rational factor $27/256$. By Niven's theorem the only rational values of
$\cos\theta$ at rational multiples of $\pi$ are $0,\pm\tfrac12,\pm1$, so $\kappa\in\Q$ exactly for
$\theta\in\{\pi/3,\pi/2,2\pi/3\}$ in the relevant range. Since $r$ and $z$ are obtained from $\kappa$
by \eqref{eq:matching}--\eqref{eq:z}, rationality of $z$ requires $\kappa\in\Q$ but is not implied by
it; among the three Niven angles only $\theta=\pi/2$ in fact yields rational $r$ and $z$. Of them only $\theta=\pi/2$ yields a rational $z$ (namely $z=9/8$, argument
$243/2048$), but there $c=-\tfrac13(1-2\cdot\tfrac12)^2=0$ and moreover $S^2-4P<0$, so the member is
not admissible: the ``real pair'' $a,b$ is itself complex, and the value is not of pure
$\pi^2+\log^2$ type. Thus a rational argument and a nonzero $\pi^2$-coefficient never coincide in this
family; the quadratic-field pentagon and octagon are the closest admissible members, their arguments
being quadratic irrationals.
\end{remark}

\begin{remark}
Among the admissible members we have computed (all $\theta=j\pi/N$ with $\gcd(j,N)=1$, $N\le25$),
the pentagon and octagon are the only two whose defining datum lies in a \emph{quadratic} field:
$z\in\Q(\sqrt5)$ for the pentagon (indeed $r=1$), and $z\in\Q(\sqrt3)$ for the octagon; every other
computed member has $z$ of degree $\ge3$. This fits the general shape of the construction: $z$ lies
in an extension of $\Q(\kappa)$ generated by a root of the matching cubic, and
$[\Q(\kappa):\Q]=\varphi(2N)/2$, so the degree can drop only when that cubic degenerates or factors
--- as it does at $\theta=\pi/4$, where its leading coefficient vanishes, and at $\theta=\pi/5$,
where $r=1$ is a root. We do not attempt to classify all angles at which such a degeneration
occurs.
\end{remark}

\subsection{No Clausen constant occurs}\label{sec:clausen}

Since $\mathrm{Cl}_2(\psi)=\sum_{k\ge1}k^{-2}\sin k\psi$ is the imaginary part of $\Li_2(e^{i\psi})$,
one might ask whether a Clausen value --- Catalan's constant $G=\mathrm{Cl}_2(\pi/2)$, say --- can
appear in an evaluation of $\mathcal{I}(z)$. It cannot: $\mathcal{I}(z)$ is a real integral, so its
nonreal roots occur in conjugate pairs, and since $\mathrm{Cl}_2(-\psi)=-\mathrm{Cl}_2(\psi)$ the
contributions of $\rho$ and $\bar\rho$ cancel. The cancellation is insensitive to the angle: a pair
placed exactly at $e^{\pm i\pi/2}$ contributes
$\Li_2(i)+\Li_2(-i)=\tfrac12\Li_2(-1)=-\tfrac{\pi^2}{24}$, the two copies of $\pm G$ having cancelled.
Two facts should be kept apart. The cancellation of the Clausen terms uses only conjugate pairing
and holds whatever the angle. What survives is $\arg^2(1-1/\rho)$, and it is the separate
hypothesis that the angle be a rational multiple of $\pi$ that makes this a rational multiple of
$\pi^2$. For the quintic bases of Section~\ref{sec:boundary} the Clausen terms still cancel, but the
surviving $\arg^2$ sits at an irrational angle; it is this, not a Clausen constant, that obstructs
purity.

\section{The maximal order: a real-root count}\label{sec:boundary}

\begin{definition}\label{def:pure}
An evaluation of $\mathcal{I}_{p,q}(z)$ is called \emph{pure} if it can be written as
\[
  c\,\pi^2+\sum_{i\le j}c_{ij}\log\alpha_i\log\alpha_j,
  \qquad c,c_{ij}\in\Q,
\]
for finitely many positive real algebraic numbers $\alpha_i$. Theorem~\ref{thm:main} asserts that
every admissible member of the ${}_5F_4$ family is pure, with $\{\alpha_i\}=\{a,|b|,r\}$.
\end{definition}

Whether the conjugate-pair construction yields a \emph{convergent} family of \emph{pure}
$\pi^2+\log^2$ evaluations is decided by a single count. We work in the regime $z<0$, where all
admissible members of Theorem~\ref{thm:main} lie.

\begin{lemma}\label{lem:realcount}
Let $\nu=\nu(p,q)\in\{0,1,2\}$ be the number of odd elements of $\{p,q\}$. For $z<0$ the fiber
$x^p(1-x)^q=1/z$ has exactly $\nu$ real roots, and consequently $(s-\nu)/2$ complex-conjugate pairs,
where $s=p+q$.
\end{lemma}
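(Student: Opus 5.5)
We count real roots of $f(x)=x^p(1-x)^q-c$ with $c=1/z<0$ by splitting the real line into the three intervals $(-\infty,0)$, $[0,1]$ and $(1,\infty)$, and studying the sign and monotonicity of $g(x)=x^p(1-x)^q$ on each.

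\textbf{The interval $[0,1]$.} Here $g\ge0>c$, so there are no roots.

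\textbf{The interval $(-\infty,0)$.} Write $x=-t$ with $t>0$. Then $g(x)=(-1)^p\,t^p(1+t)^q$, and $t^p(1+t)^q$ is strictly increasing from $0$ to $\infty$.
\begin{itemize}
\item If $p$ is odd, $g$ runs strictly monotonically over $(-\infty,0)$, so there is exactly one root.
\item If $p$ is even, $g>0$ there, so there is none.
\end{itemize}

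\textbf{The interval $(1,\infty)$.} Write $x=1+u$ with $u>0$. Then $g=(-1)^q(1+u)^p u^q$, and $(1+u)^pu^q$ is strictly increasing from $0$ to $\infty$. So there is exactly one root if $q$ is odd and none if $q$ is even.

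Summing the three intervals, the number of real roots is $[p\text{ odd}]+[q\text{ odd}]=\nu$.

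Two points need care. First, each root found on $(-\infty,0)$ or $(1,\infty)$ is simple, because $g'\neq0$ there by strict monotonicity. This guarantees the count is of distinct real roots rather than roots with multiplicity. Second, the fiber polynomial has degree $s=p+q$ and real coefficients, so its nonreal roots come in conjugate pairs. With $s-\nu$ nonreal roots counted with multiplicity, this gives $(s-\nu)/2$ pairs. The parity is consistent, since $s\equiv\nu \pmod 2$.

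The main point to state carefully is the meaning of ``pairs'' when roots might repeat. Real multiple roots are excluded above. A repeated nonreal root would require $g'(x)=0$, i.e.\ $x\in\{0,1,p/s\}$, and these are all real, so every root is simple. Hence there are exactly $(s-\nu)/2$ distinct conjugate pairs.
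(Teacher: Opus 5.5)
Your proof is correct and takes essentially the paper's route: you split the real line at $0$ and $1$ and use the sign and monotonicity of $x^p(1-x)^q$ on the outer intervals (you via the substitutions $x=-t$ and $x=1+u$, the paper via $f'=x^{p-1}(1-x)^{q-1}(p-sx)$), and you add a check that every root is simple, which the paper leaves implicit. One small repair: strict monotonicity alone does not give $g'\neq0$ (consider $x^3$ at $0$), but your own observation that the critical points of $g$ lie in $\{0,1,p/s\}\subset[0,1]$ gives $g'\neq0$ on $(-\infty,0)\cup(1,\infty)$ directly.
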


\begin{proof}
Put $w=1/z<0$ and $f(x)=x^p(1-x)^q$. On $(0,1)$ both factors are positive, so $f>0$ and
$f(x)=w<0$ has no root there. On $(-\infty,0)$ one has $\operatorname{sign}f=(-1)^p$, so $f<0$
throughout iff $p$ is odd; and there $f'=x^{p-1}(1-x)^{q-1}(p-sx)$ has constant sign
$(-1)^{p-1}$, so $f$ is monotonic from $f(-\infty)=-\infty$ to $f(0)=0$ and meets the level $w<0$
exactly once. Symmetrically, on $(1,\infty)$ one has $\operatorname{sign}f=(-1)^q$, and when $q$ is
odd $f$ is monotonic from $f(1)=0$ to $f(\infty)=-\infty$, meeting $w$ once. Each odd exponent thus
contributes exactly one real root and each even exponent none, so the number of real roots is $\nu$;
the rest are complex and paired.
\end{proof}

The conjugate-pair construction forces \emph{one} complex pair to a prescribed rational angle,
consuming the single free parameter $z$. Purity requires \emph{every} complex pair to be at a
rational angle. The number of pairs \emph{not} forced is therefore
\begin{equation}\label{eq:leftover}
  \ell(p,q)\;=\;\frac{s-\nu}{2}\;-\;1 .
\end{equation}
When $\ell=0$ the forced pair is the only complex pair, and the member is automatically pure: this is
the situation of Theorem~\ref{thm:main}. When $\ell\ge1$ at least one complex pair is left unforced;
for such a member to be pure, that pair would have to sit at a rational angle by coincidence, with no
parameter available to arrange it. The following theorem records the exact $\ell=0$ dichotomy, which
is what governs the existence of a \emph{one-parameter} pure family; the impurity of the $\ell\ge1$
members is addressed separately below.

\begin{theorem}[maximality of automatic purity for the one-pair construction]\label{thm:boundary}
Let $s=p+q$ and let $\ell(p,q)$ be as in \eqref{eq:leftover}. Then
\[
  \ell(p,q)=0 \iff s-\nu=2 ,
\]
where $\nu$ is as in Lemma~\ref{lem:realcount}; this holds precisely when $s=3$ or $s=4$; up to interchanging $p$ and $q$ these are the cases
$(p,q)=(1,2)$ and $(1,3)$, i.e.\ the ${}_4F_3$ base $x(1-x)^2$ and the ${}_5F_4$ base $x(1-x)^3$.
Consequently:
\begin{enumerate}[label=\textup{(\roman*)}]
\item For $s\le4$ with $\ell=0$, the forced conjugate pair is the \emph{only} complex pair of the
fiber for $z<0$. Every admissible member is then automatically pure, and the construction of
Theorem~\ref{thm:main} yields a convergent one-parameter family of pure $\pi^2+\log^2$ evaluations.
\item For every $s\ge5$ one has $\ell\ge1$: after the forced pair, at least one further conjugate
pair remains, and no parameter is left with which to constrain its angle. Hence no convergent
one-parameter family of pure evaluations is produced by this construction.
\end{enumerate}
Thus $s=4$ is the maximal order --- and ${}_5F_4$ the largest central-binomial series --- for which
this one-pair construction \emph{automatically forces} purity in the convergent regime $z<0$.
\end{theorem}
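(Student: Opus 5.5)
The plan is to reduce everything to the arithmetic of $s$ and $\nu$ using Lemma~\ref{lem:realcount}, and then read off (i) and (ii) from the count of unforced conjugate pairs. The definition \eqref{eq:leftover} gives directly $\ell=0\iff (s-\nu)/2=1\iff s-\nu=2$. Since $p,q$ are coprime they are not both even, so $\nu\in\{1,2\}$. Moreover, $\nu\equiv s\pmod 2$, because the parity of $p+q$ equals the parity of the number of odd summands. Hence $s-\nu$ is even and nonnegative.

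The case split is on the parity of $s$.
\begin{itemize}
\item If $s$ is odd, exactly one of $p,q$ is odd, so $\nu=1$. Then $s-\nu=2$ forces $s=3$, i.e.\ $\{p,q\}=\{1,2\}$.
\item If $s$ is even, coprimality forces both $p$ and $q$ odd, so $\nu=2$. Then $s-\nu=2$ forces $s=4$, i.e.\ $\{p,q\}=\{1,3\}$, since $(2,2)$ is excluded by $\gcd=1$.
\end{itemize}
Conversely, in both cases $s\ge5$ gives $s-\nu\ge 5-2=3$, hence $s-\nu\ge4$ by evenness and $\ell\ge1$. This establishes the dichotomy and the list of bases, with $x(1-x)^3$ corresponding to \eqref{eq:5F4} and $x(1-x)^2$ to the ${}_4F_3$ of~\cite{DaurizioDiTrani}.

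For (i), I would invoke Lemma~\ref{lem:realcount} at $z<0$. When $\ell=0$ the fiber has exactly one conjugate pair, and the construction forces this pair to be $re^{\pm i\theta}$, so no other nonreal root exists. I would then repeat the argument of Section~\ref{sec:proof} for the base in question. Lemma~\ref{lem:factor} and the Landen step of Lemma~\ref{lem:landen} hold verbatim for general $(p,q)$, because the divergent coefficient $\sum_k\ln(1-1/\rho_k)$ still vanishes: the real roots lie outside $[0,1]$ by the proof of Lemma~\ref{lem:realcount}, and the arguments cancel within conjugate pairs. The result is $\mathcal{I}_{p,q}=\frac{q}{2s}\sum_k\ln^2(1-1/\rho_k)$. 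The only $\pi^2$ contribution is $\arg^2(1-1/\rho)$ from the single pair. The analogue of the trisection in Lemma~\ref{lem:angle} gives $p\theta+q\arg(1-\rho)=-\pi$, so this argument is a rational combination of $\pi$ and $\theta$ and hence a rational multiple of $\pi$. The moduli $\ln|1-1/\rho_k|$ are eliminated in terms of $\ln|\rho_k|$ via $p\ln|\rho|+q\ln|1-\rho|=\ln|1/z|$, exactly as in Lemma~\ref{lem:collapse}. This yields a pure form in the sense of Definition~\ref{def:pure}, and for $(1,3)$ it is precisely Theorem~\ref{thm:main}.

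For (ii), $\ell\ge1$ means at least one further conjugate pair exists by Lemma~\ref{lem:realcount}. The only free parameter $z$ has been consumed by the matching condition, so the leftover pair's angle is a determined function of $\theta$ with no further freedom. I would phrase (ii) exactly as the statement does: a statement about what the construction \emph{produces automatically}, not a proof that every member is impure. The main obstacle is therefore not technical but definitional. One must take care that (ii) claims only the absence of a \emph{forced} one-parameter pure family, since proving genuine impurity at an irrational leftover angle would require transcendence input beyond the paper's scope, and the paper explicitly defers that question.
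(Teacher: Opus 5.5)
Your argument is correct, and its core is the paper's: both proofs read $\ell=0\iff s-\nu=2$ off \eqref{eq:leftover} and Lemma~\ref{lem:realcount} and finish by arithmetic. Your observation $\nu\equiv s\pmod 2$, combined with coprimality, tightens that arithmetic slightly. It disposes of $(2,2)$ and $s=2$ explicitly, and it bases the step from $s-\nu\ge3$ to $\ell\ge1$ on the integrality of $\ell$, which the paper leaves implicit.

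The real difference is in (i). The paper's proof does not re-prove purity. It relies on the sentence before the theorem (``this is the situation of Theorem~\ref{thm:main}'') and points to Section~\ref{sec:examples} for convergence. Strictly, then, it covers only $x(1-x)^3$. You instead rerun Section~\ref{sec:proof} for an arbitrary $\ell=0$ base: Lemma~\ref{lem:factor}, the Landen step with $\prod_k(1-1/\rho_k)=1$, the angle relation, and the modulus elimination. This actually establishes purity for $x(1-x)^2$ and for the mirror shapes, at the cost of a longer argument. The paper's route is shorter only because it leans on the already-proved quartic case.

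Two small repairs are needed.

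\emph{The angle relation.} State it as a congruence, $p\theta+q\arg(1-\rho)\equiv\pi\pmod{2\pi}$, rather than as $=-\pi$. For $x^2(1-x)$ and $x^3(1-x)$ the half-plane argument of Lemma~\ref{lem:angle} forces the value $+\pi$. Only the congruence is needed to make $\arg(1-1/\rho)$ a rational multiple of $\pi$, so nothing downstream breaks.

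\emph{Existence of convergent members.} Purity of every admissible member does not by itself give the ``convergent one-parameter family'' claimed in (i). You also need admissible members with $|z|<z_c$ to exist, and your proposal never addresses this. For $s=4$, cite Table~\ref{tab:main}. For $s=3$, exhibit one. For example, take $\theta=\pi/6$ and use the analogue of the triangle argument of Lemma~\ref{lem:modulus}, with angle $(\pi+\theta)/2$ at the vertex $1$. This gives $r=(1+\sqrt3)/2$, real root $a=(1-\sqrt3)/2$, and $z=2-2\sqrt3$, well inside $|z|<27/4$.
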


\begin{proof}
By Lemma~\ref{lem:realcount} and \eqref{eq:leftover}, $\ell=0$ iff
$s-\nu=2$. Since $\nu\le2$, this forces $s\le4$; and $s=3$ (with $p+q$ odd, so exactly one of $p,q$
is odd and $\nu=1$) gives $3-1=2$, while $s=4$ with $p=1,q=3$ (both odd, $\nu=2$) gives $4-2=2$. For
$s\ge5$ one has $s-\nu\ge5-2=3>2$, so $\ell\ge1$ and an unforced pair survives.
Convergence of the resulting members ($|z|<z_c$) for $s=3,4$ is exhibited in
Section~\ref{sec:examples} and below.
\end{proof}

\begin{remark}
Since $1/z$ is real, every root satisfies $p\arg\rho+q\arg(1-\rho)\equiv0\pmod\pi$, so $\arg\rho$ is a
rational multiple of $\pi$ if and only if $\arg(1-\rho)$ is. An unforced pair sitting at a rational
angle would therefore make $0,1,\rho$ a triangle all of whose angles are rational multiples of $\pi$,
with modulus a ratio of sines of such angles, exactly as in Lemma~\ref{lem:modulus}. Deciding whether
two such configurations can occur on a single fiber is a question about vanishing sums of roots of
unity in the sense of Mann and Conway--Jones; we do not pursue it here.
\end{remark}

\begin{remark}
Part (ii) concerns the construction, not the individual members: for $s\ge5$ the forced pair no
longer exhausts the complex roots, so purity ceases to be automatic. An isolated parameter value at
which the unforced pair happens to lie at a rational angle is not excluded by the count; whether any
such value exists we do not know, and we make no claim about it here.
\end{remark}

\begin{remark}
The distinction between the two admissible orders is qualitative. For $s=3$ (${}_4F_3$) the fiber for
$z<0$ has one real root and one complex pair; forcing the pair to a rational angle gives the
$\binom{3n}{n}$ evaluations, convergent for $|z|<27/4$. For $s=4$ (${}_5F_4$) the fiber has two real
roots and one complex pair, giving the family of the present paper. The earlier all-real
configuration of~\cite{DaurizioDiTrani} (three real roots for the cubic base) occurs only for $z>0$
with $z>z_c$ and is non-convergent; the convergent pure families live at $z<0$, and exist for both
$s=3$ and $s=4$.
\end{remark}

\begin{remark}[computational evidence, $s=5$]
For all three quintic shapes $x(1-x)^4$, $x^2(1-x)^3$, $x^3(1-x)^2$ one has $s=5$ and
$\nu=1$, so $\ell=1$: each convergent forced member retains exactly one
unforced complex pair. Whether such a pair can land at a rational angle is not settled by
Theorem~\ref{thm:boundary}. We tested it by an exhaustive search over $\theta=j\pi/N$ with
$\gcd(j,N)=1$ and $N\le50$, comprising $3{,}641$ members. Within that range no quintic fiber has two
complex pairs both at rational angles, and no convergent member has a totally real cofactor.

Three configurations illustrate how the two possible routes to purity fail.
\begin{itemize}
\item \textbf{An irrational leftover pair.} Forcing $\theta=\pi/3$ in $x(1-x)^4$ gives the convergent
point $z=5.7756\ldots<z_c=12.207\ldots$, but the fiber carries a second complex pair at the
irrational angle $0.16314\pi$.
\item \textbf{Divergence.} Forcing $\theta=\pi/7$ in the same base places the sole
totally-real-cofactor branch at $z=12.36\ldots>z_c$, where \eqref{eq:series} diverges. The remaining
branches, including the one at $|z|=31.79\ldots$, keep a complex pair at an irrational angle and are
impure rather than divergent.
\item \textbf{A rational pair that does not help.} One may force a pair \emph{exactly} at
$\theta=\pi/2$ by imposing a factor $x^2+1$: for $x^3(1-x)^2$ this succeeds uniquely at $z=-2$, with
fiber $(x^2+1)(x^3-2x^2+2)$. The quadratic supplies the clean pair $\pm i$, but the cubic cofactor
contributes a complex pair at the irrational angle $0.1285\pi$, so the member is impure.
\end{itemize}
In each case the degree-three cofactor is what obstructs purity, and in every case tested it failed
to be totally real within $|z|<z_c$. At the other end the situation is transparent: for $s=3$, three
real roots of $x(1-x)^2=1/z$ require $0<1/z\le4/27$, that is $z\ge z_c=27/4$, so the totally real
configuration is non-convergent. This is why the ${}_4F_3$ pure family, like the ${}_5F_4$, lives at
$z<0$ with a single complex pair.

We stress that all of this is evidence, not proof: the search covers only $N\le50$ and establishes
nothing for larger $N$, nor for $s\ge6$, and none of the computational statements in this remark is
used in the proof of Theorem~\ref{thm:boundary} or of any other result above. The search ran over all pairs $(j,N)$ with $2\le N\le50$ and $1\le j<N$, $\gcd(j,N)=1$, one
representative per angle (so that $j/N$ and $j'/N'$ with $j/N=j'/N'$ are not counted twice); for each
$(j,N)$ and each of the three quintic shapes, every positive real root of the corresponding matching
equation was taken, giving $3{,}641$ members in all. Fiber polynomials were formed exactly over the
relevant number field and their roots located to $40$ significant digits. An angle was recorded as
rational when it agreed with some $j\pi/N$, $N\le50$, to within $10^{-25}$; this is a heuristic
detection criterion, not a proof of irrationality for the angles it rejects.
\end{remark}

\begin{remark}
For the quintic bases the symmetric-weight identity still gives an exact value: the analogue of
\eqref{eq:landen-form} reads
$\mathcal{I}_{p,q}(z)=\tfrac{q}{p+q}\cdot\tfrac12\sum_\rho\ln^2(1-1/\rho)$,
and a reflection-symmetric combination of mirror shapes reduces the transcendental content to
algebraic $\arg^2$ terms. But these are not rational multiples of $\pi^2$ unless every complex root
sits at a rational angle, which by Lemma~\ref{lem:realcount} fails for a single quintic shape. (The
torsion criterion for such Bloch elements is that of Nahm~\cite{Nahm,Zagier}.)
\end{remark}

\end{document}